\newtheorem*{rep@theorem}{\rep@title}
\newcommand{\newreptheorem}[2]{
\newenvironment{rep#1}[1]{
 \def\rep@title{#2 \ref{##1}}
 \begin{rep@theorem}}
 {\end{rep@theorem}}}
\theoremstyle{plain}
\newtheorem{thm}{Theorem}
\newtheorem{lem}[thm]{Lemma}
\theoremstyle{definition}
\newtheorem{defn}{Definition}
\theoremstyle{remark}
\newcommand{\IN}{\mathbb{N}}
\newcommand{\R}{\mathfrak{R}}
\newcommand{\F}{\mathfrak{F}}
\newcommand{\set}[1]{\left\{ #1 \right\}}
\newcommand{\setb}[3]{\left\{ #1 \in #2 \mid #3 \right\}}
\newcommand{\setbs}[2]{\left\{ #1 \mid #2 \right\}}
\newcommand{\card}[1]{\left|#1\right|}
\newcommand{\irange}[1]{\left[#1\right]}
\newcommand{\parens}[1]{\left( #1 \right)}
\newcommand{\DefinedAs}{\mathrel{\mathop:}=}
\title{Structural fixed-point theorems}\thanks{[Editorial Comment, added 2021] This note was originally written sometime in 2007 with various updates in 2008 and 2009, and then updated into its current state in 2014. The authors had planned on expanding it and submitting for publication, but other interests got in the way and it sat dormant. Unfortunately, Landon Rabern died in October 2020, so the authors will not be expanding it. Since the core ideas are interesting the first author has decided to make the paper available online.  See \cite{danger}, appendix A (The Global Function) for an alternative, but much condensed presentation. For more recent frameworks applying graph-theoretic tools to semantic paradoxes see  Beringer, T., \& Schindler, T. (2017). A graph-theoretic analysis of the semantic paradoxes. \textit{The Bulletin of Symbolic Logic}, 23(4), 442-492; Hsiung, M. (2020). What paradoxes depend on. \textit{Synthese}, 197(2), 887-913.}
\author{Brian Rabern and Landon Rabern}
\begin{document}
\maketitle
\section{Introduction}
\noindent The semantic paradoxes are often associated with self-reference or referential circularity. However, Yablo has shown in \cite{yablo93} that there are infinitary versions of the paradoxes that do not involve this form of circularity. It remains an open question what \textit{relations of reference} between collections of sentences afford the structure necessary for paradoxicality. In \cite{danger} we laid the groundwork for a general investigation into the nature of reference structures that support the semantic paradoxes.  The remaining task is to classify the so-called \emph{dangerous} directed graphs.  In appendix A of \cite{danger}, we sketched a reformulation of the problem in terms of fixed points of certain functions.  Here we expand on this reformulation, removing all syntactic considerations to get a purely mathematical problem.  It is definitely possible that the problem's solution depends on the axioms of set theory we choose---this would be an interesting outcome.

For sets $A$ and $B$, we write $A^B$ for all functions from $B$ to $A$. We use standard von Neumann ordinals, so $2 \DefinedAs \set{0,1}$, etc.  When it is convenient to visualize $g \in A^B$ as a sequence of elements from $A$ indexed by $B$, we use the notation $g_v$ to mean $g(v)$ for $v \in B$. 

\section{Dangerous graphs}

\noindent A \emph{directed graph} is a pair $G \DefinedAs (V, E)$ where $V$ is any set and $E \subseteq V^2$. For all $\alpha \in 2^V$ and $I \subseteq V$, put

\[\alpha^I \DefinedAs \setb{\beta}{2^V}{\forall_{x \in V - I} \beta(x) = \alpha(x)}.\]

\vspace{.1in}
\noindent A function $g \in 2^{2^V}$ is \emph{independent of} $I \subseteq V$ if $g$ is constant on $\alpha^I$ for each $\alpha \in 2^V$. A function $f \in \parens{2^V}^{2^V}$ \emph{respects} $G$ if for each $v \in V$, the function $f^v$ given by $f^v(x) \DefinedAs f(x)_v$ is independent of $V - N^+(v)$.  We let $\R(G)$ be all the $f \in \parens{2^V}^{2^V}$ respecting $G$.

\begin{defn}
A graph $G$ is \emph{dangerous} if $\R(G)$ contains a fixed-point-free function.
\end{defn}

It is not difficult to see that on this definition, the same graphs come out as dangerous as those in \cite{danger}. Define an inverse operation for $f \in \parens{2^V}^{2^V}$ by $\R^{-1}(f) \DefinedAs \setbs{G}{f \in \R(G)}$.  Let $\F \subseteq \parens{2^V}^{2^V}$ be the fixed-point-free functions.  Then $\R^{-1}(\F)$ is the set of all dangerous graphs (on $V$).  It is easy to see that a graph is dangerous if any subgraph of it is and thus the complement of $\R^{-1}(\F)$ can be defined by excluding subgraphs (excluding all of $\R^{-1}(\F)$ is one such description).  The problem is to find such a description using a ``small'' set of subgraphs.  When $V$ is finite, the complement of $\R^{-1}(\F)$ is given precisely by excluding the set of all directed cycles (on $V$).  The next case to figure out is when $V = \omega$.  

\section{An example: locally finite graphs}
\noindent In \cite{danger}, the dangerous locally finite graphs we're classified using G\"{o}del compactness.  In fact, a larger class of dangerous graphs was classified by a direct application of Zorn's lemma.  This same work can be done in the setting of fixed points of functions using Tychonoff's theorem.  This section gives an example of this method.

The graphs in this section all have $V = 2^\omega$. Adorn $2^\omega$ with the product topology where each copy of $2$ has the discrete topology.  By Tychonoff's theorem, $2^\omega$ is compact.  Also, this topology is generated by the $p$-adic metric on $2^\omega$; that is, for sequences $(x_i)_{i \in \omega}$ and $(y_i)_{i \in \omega}$ define $d((x_i), (y_i))$ as $\frac{1}{k}$ where $k \in \omega$ is minimal such that $x_k \neq y_k$.  Using this metric, the following is easy to see. 

We call a graph $G$ \emph{locally finite} if $\card{N^+(v)}$ is finite for all $v \in V(G)$.

\begin{lem}\label{LocallyFiniteImpliesContinuous}
All functions in $\R(G)$ are continuous iff $G$ is locally finite.
\end{lem}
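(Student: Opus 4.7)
The plan is to reduce the claim to the universal property of the product topology on $2^V$: a function $f \in \parens{2^V}^{2^V}$ is continuous iff each coordinate $f^v : 2^V \to 2$ is, and a map into discrete $2$ is continuous iff it is locally constant. The first step for both directions is to observe that ``$f^v$ is independent of $V - N^+(v)$'' is precisely the assertion that $f^v(\beta)$ depends only on $\beta|_{N^+(v)}$; equivalently, $f^v$ factors through the projection $\pi_{N^+(v)} \colon 2^V \to 2^{N^+(v)}$.

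For the ``$\Leftarrow$'' direction, assume $G$ is locally finite. Then $|N^+(v)| < \infty$ for every $v$, so $2^{N^+(v)}$ is a finite discrete space and the induced map on it is automatically continuous; composing with the continuous projection $\pi_{N^+(v)}$ shows each $f^v$ is continuous, and hence so is $f$.

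For the ``$\Rightarrow$'' direction I would argue the contrapositive. Suppose some $v_0 \in V$ has $N^+(v_0)$ infinite, pick a sequence of distinct $v_1, v_2, \ldots \in N^+(v_0)$, and define $f$ by setting $f(x)_{v_0} = 1$ when $x(v_i) = 0$ for every $i \geq 1$, $f(x)_{v_0} = 0$ otherwise, and $f(x)_v = 0$ for every $v \neq v_0$. Then $f \in \R(G)$ is immediate, since $f^{v_0}$ depends only on coordinates in $\set{v_1, v_2, \ldots} \subseteq N^+(v_0)$ and every other $f^v$ is constant. At the all-zero element $\alpha^\ast \in 2^V$ we have $f^{v_0}(\alpha^\ast) = 1$, yet every basic open neighborhood of $\alpha^\ast$ fixes only finitely many coordinates and therefore contains points at which some $v_i$-coordinate equals $1$, where $f^{v_0}$ takes the value $0$. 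So $f^{v_0}$, and hence $f$, fails to be continuous.

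The main obstacle is really just the notational translation between the paper's definition (``$f^v$ independent of $V - N^+(v)$'') and the standard ``$f^v$ factors through a finite-coordinate projection''; once that step is in place, both directions collapse to elementary facts about continuous maps into a discrete target. Notably, Tychonoff compactness is not needed for this lemma itself, even though it will be the key tool in the upcoming application.
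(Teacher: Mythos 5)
Your proof is correct, and it supplies the details the paper omits entirely (the authors simply assert the lemma is ``easy to see'' from the metric). Since the $p$-adic metric generates exactly the product topology on $2^\omega$, your coordinatewise argument---each $f^v$ factors through a finite projection in the locally finite case, and the explicit non-continuous $f^{v_0}$ at the all-zero point when some $N^+(v_0)$ is infinite---is the argument the authors intend, just phrased topologically rather than metrically.
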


We'll need a simple fixed point lemma.

\begin{lem}\label{SimpleFixedPoint}
Let $(M,d)$ be a compact metric space.  If $f \in M^M$ is continuous and for every $\epsilon > 0$ there exists $x \in M$ such that $d(f(x), x) < \epsilon$, then $f$ has a fixed point.
\end{lem}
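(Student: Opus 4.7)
The plan is to exploit compactness by turning the displacement $x \mapsto d(f(x),x)$ into a continuous real-valued function and showing its infimum is attained at zero; equivalently, one can extract a convergent subsequence from a sequence of ``almost fixed points'' and use continuity of $f$ to promote the limit to an honest fixed point. Either route is routine, and I will sketch both since they are essentially the same argument packaged differently.

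First, define $g \colon M \to \IR$ by $g(x) \DefinedAs d(f(x), x)$. Since $f$ is continuous and $d$ is continuous on $M \times M$, the function $g$ is continuous. By the hypothesis, for every $\epsilon > 0$ there is some $x \in M$ with $g(x) < \epsilon$, so $\inf_{x \in M} g(x) = 0$. Because $M$ is compact, $g$ attains its infimum at some $x^* \in M$, and hence $d(f(x^*), x^*) = g(x^*) = 0$, giving $f(x^*) = x^*$.

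Alternatively, for each $n \in \IN$ with $n \geq 1$, choose $x_n \in M$ with $d(f(x_n), x_n) < \frac{1}{n}$. Since $M$ is compact (and metric, hence sequentially compact), there is a subsequence $(x_{n_k})$ converging to some $x^* \in M$. Continuity of $f$ gives $f(x_{n_k}) \to f(x^*)$. Combined with $d(f(x_{n_k}), x_{n_k}) < \frac{1}{n_k} \to 0$ and $x_{n_k} \to x^*$, the triangle inequality forces $f(x^*) = x^*$.

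There is no real obstacle here: the only thing to check carefully in the first approach is joint continuity of $d$ (standard) and attainment of the infimum (immediate from compactness plus continuity), and in the second approach the only subtlety is that compact metric spaces are sequentially compact. Either packaging yields the lemma in a few lines.
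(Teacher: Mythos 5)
Your proposal is correct, and your second sketch (almost-fixed points, sequential compactness, continuity of $f$, triangle inequality) is exactly the argument the paper gives; the first sketch via the continuous displacement function $g(x) = d(f(x),x)$ attaining its infimum is just an equivalent repackaging. Nothing further is needed.
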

\begin{proof}
Let $f \in M^M$ be such a function.  For $k \in \IN$, pick $x_k \in M$ such that $d(f(x_k), x_k) < \frac{1}{k}$.  Since $M$ is compact, the sequence $x_1, x_2, \ldots$ has a convergent subsequence $x_{n_1}, x_{n_2}, \ldots$ converging to $x$, say.  Since $f$ is continuous, the sequence $f(x_{n_1}), f(x_{n_2}), \ldots$ converges to $f(x)$. We claim that $f(x) = x$.  Suppose not, then we have $\epsilon >0$ such that $d(f(x), x) > \epsilon$.  Since these two sequences converge, we can choose $k \in \IN$ so that $\max\set{\frac{1}{n_k}, d(x_k, x), d(f(x), f(x_k))} < \frac{\epsilon}{3}$.  But then we have $d(f(x), x) \leq d(f(x), f(x_k)) + d(f(x_k), x) \leq d(f(x), f(x_k)) + d(f(x_k), x_k) + d(x_k, x) < \epsilon$, a contradiction.
\end{proof}

Now the characterization of dangerous graphs follows easily.

\begin{thm}\label{LocallyFinite}
A locally finite graph is dangerous iff it contains a directed cycle.
\end{thm}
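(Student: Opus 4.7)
The plan is to prove the two directions separately. The forward direction is a direct construction of a fixed-point-free function from any directed cycle, and the reverse direction combines Lemma~\ref{LocallyFiniteImpliesContinuous} and Lemma~\ref{SimpleFixedPoint} with an inductive argument exploiting acyclicity.

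For the forward direction, suppose $G$ contains a directed cycle on vertices $v_0, v_1, \ldots, v_{n-1}$ with edges $v_i \to v_{(i+1) \bmod n}$. I would define $f$ by $f(x)_{v_0} \DefinedAs 1 - x_{v_1}$, $f(x)_{v_i} \DefinedAs x_{v_{(i+1) \bmod n}}$ for $1 \le i \le n-1$, and $f(x)_v \DefinedAs 0$ for every $v$ off the cycle. Each $f^v$ reads at most one coordinate of $x$, and that coordinate lies in $N^+(v)$, so $f \in \R(G)$. A fixed point would simultaneously force $x_{v_0} = x_{v_1} = \cdots = x_{v_{n-1}}$ around the cycle and $x_{v_0} = 1 - x_{v_1}$, a contradiction; hence $f$ is fixed-point-free and $G$ is dangerous.

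For the reverse direction, assume $G$ is locally finite and acyclic and let $f \in \R(G)$ be arbitrary. By Lemma~\ref{LocallyFiniteImpliesContinuous}, $f$ is continuous on the compact metric space $2^V$, so by Lemma~\ref{SimpleFixedPoint} it suffices to produce, for every $\epsilon > 0$, a point $x$ with $d(f(x), x) < \epsilon$. Unpacking the $p$-adic metric, this reduces to the finitary claim: for every finite $F \subseteq V$ there exists $x \in 2^V$ with $f(x)_v = x_v$ for every $v \in F$.

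The main obstacle is this finitary claim, which I would prove by induction on $|F|$. Since $G$ is acyclic and $F$ is finite, the induced subgraph $G[F]$ is a finite DAG and therefore has a source $v \in F$, by which I mean $v \notin N^+(u)$ for every $u \in F \setminus \set{v}$. The induction hypothesis applied to $F \setminus \set{v}$ supplies $x' \in 2^V$ with $f(x')_u = x'_u$ for all $u \in F \setminus \set{v}$; I then modify only the $v$-coordinate, setting $x \DefinedAs x'$ off $v$ and $x_v \DefinedAs f(x')_v$. Acyclicity precludes a self-loop at $v$, so $v \notin N^+(v)$ and hence $f^v$ is independent of the one coordinate I changed, giving $f(x)_v = f(x')_v = x_v$; the source property ensures that for every $u \in F \setminus \set{v}$, the set $N^+(u)$ avoids $v$, so $f^u$ is also unaffected and the previously established equations $f(x)_u = f(x')_u = x'_u = x_u$ persist. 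The only delicate point is aligning the independence-of-$V - N^+(\cdot)$ conditions of $\R(G)$ with the source and no-self-loop observations, and once that bookkeeping is done the induction closes and, together with the compactness step above, yields the theorem.
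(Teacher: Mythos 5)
Your proposal is correct and follows essentially the same route as the paper: the converse direction uses Lemma \ref{LocallyFiniteImpliesContinuous} to get continuity, reduces to approximate fixed points agreeing with $f$ on a finite initial segment, and closes with Lemma \ref{SimpleFixedPoint}. You merely make explicit two steps the paper leaves implicit---the cycle-based fixed-point-free function for the forward direction, and the source-vertex induction proving that finite acyclic graphs are not dangerous---both of which check out.
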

\begin{proof}
Plainly, if a graph contains a directed cycle it is dangerous.  For the other direction, let $G$ be an acyclic locally finite graph and $f \in \R(G)$.  By Lemma \ref{LocallyFiniteImpliesContinuous}, $f$ is continuous. Since acyclic finite graphs are not dangerous, for each $k \in \omega$ and any choice of $x_{k+1}, x_{k+2}, x_{k+3}, \ldots$ we have $x \in 2^\omega$ such that $f(x)_i = x_i$ for $i \in \irange{k}$.  Therefore, in the $p$-adic metric, $d(f(x), x) < \frac{1}{k}$.  Now applying Lemma \ref{SimpleFixedPoint} shows that $f$ has a fixed point.  Hence $G$ is not dangerous.
\end{proof}

We note that by using ultrafilter convergence, the use of metric spaces can be done away with and Theorem \ref{LocallyFinite} can be proved in a similar way for all $V$.  This more general result is proved using G\"{o}del compactness in \cite{danger}.

\bibliographystyle{amsplain}
\bibliography{SPRG}

\providecommand{\bysame}{\leavevmode\hbox to3em{\hrulefill}\thinspace}
\providecommand{\MR}{\relax\ifhmode\unskip\space\fi MR }
\providecommand{\MRhref}[2]{%
  \href{http://www.ams.org/mathscinet-getitem?mr=#1}{#2}
}
\providecommand{\href}[2]{#2}
\begin{thebibliography}{1}

\bibitem{danger}
Landon Rabern, Brian Rabern, and Matthew Macauley, \emph{Dangerous reference
  graphs and semantic paradoxes}, Journal of Philosophical Logic \textbf{42}
  (2013), no.~5, 727--765.

\bibitem{yablo93}
Steven Yablo, \emph{Paradox without self-reference}, Analysis \textbf{53}
  (1993), no.~4, 251--252.

\end{thebibliography}
\end{document}